\newtheorem{theoremx}{\bf Theorem}
\newtheorem{thm}{\bf Theorem}[section]
\newtheorem{lemma}[thm]{\bf Lemma}
\newtheorem{cor}[thm]{\bf Corollary}
\theoremstyle{definition}
\theoremstyle{remark}
\newtheorem{fact}[thm]{\bf Fact}
\newtheorem{remark}[thm]{\bf Remark}
\newtheorem{question}[thm]{\bf Question}
\numberwithin{equation}{section}
\newcommand{\id}{\operatorname{id}}
\newcommand{\Ext}{\operatorname{Ext}}
\newcommand{\Tor}{\operatorname{Tor}}
\newcommand{\Hom}{\operatorname{Hom}}
\newcommand{\chara}{\operatorname{char}}
\newcommand{\codim}{\operatorname{codim}}
\newcommand{\m}{\mathfrak{m}}
\newcommand{\fm}{\mathfrak{m}}
\newcommand{\ls}{\leqslant}
\newcommand{\gs}{\geqslant}
\DeclareMathOperator{\Soc}{Soc}
\DeclareMathOperator{\Coker}{Coker}
\DeclareMathOperator{\Ker}{Ker}
\DeclareMathOperator{\Image}{Im}
\DeclareMathOperator{\soc}{soc}
\DeclareMathOperator{\cx}{cx}
\newcommand{\w}{\omega}
\def \ZZ{\mathbb Z}
\dedicatory{}
\begin{document}
\title[Exterior powers and Tor-persistence]{Exterior powers and Tor-persistence}

\author[Justin Lyle]{Justin Lyle}
\address{Justin Lyle\\ Department of Mathematical Sciences \\ University of Arkansas \\ 525 Old Main \\ University of Arkansas\\ Fayetteville, Arkansas 72701}
\email{jl106@uark.edu}
 
\author[Jonathan Monta\~no]{Jonathan Monta\~no$^*$}
\address{Jonathan Monta\~no \\ Department of Mathematical Sciences  \\ New Mexico State University  \\PO Box 30001\\Las Cruces, NM 88003-8001}
\email{jmon@nmsu.edu}
\urladdr{https://web.nmsu.edu/~jmon/}
\thanks{$^{*}$ The second author is  supported by NSF Grant DMS \#2001645.}

\author[Keri Sather-Wagstaff]{Keri Sather-Wagstaff}
\address{Keri Sather-Wagstaff \\ School of Mathematical and Statistical Sciences \\ Clemson University \\ O-110 Martin Hall \\ Box 340975 \\ Clemson, S.C. 29634 USA}
\email{ssather@clemson.edu}
\urladdr{https://ssather.people.clemson.edu/}

\begin{abstract}
A  commutative Noetherian ring $R$ is said to be Tor-persistent if, for any finitely generated $R$-module $M$, the vanishing of $\Tor_i^R(M,M)$ for $i\gg 0$ implies $M$ has finite projective dimension. An open question of Avramov, et. al. asks whether  any such $R$ is Tor-persistent. In this work, we exploit properties of exterior powers of modules and complexes to provide several  partial answers to this question; in particular, we show that every local ring $(R,\m)$ with $\m^3=0$ is Tor-persistent. As a consequence of our methods, we provide a new proof of the Tachikawa  Conjecture for positively graded  rings over a field  of characteristic different from 2. 
  \end{abstract}

\keywords{
Exterior squares,
Tachikawa's Conjecture,
Tor-persistence}

\subjclass[2010]{
13D07, 
13C10, 
13D02. 
}

\maketitle

\section{Introduction}


Several conjectures and open questions on the rigidity of $\Ext$ and $\Tor$ have recently gained  much attention. Among the most well-known of these is the Auslander-Reiten conjecture which poses that given  a commutative Noetherian ring $R$ and a finitely generated $R$-module $M$,  the vanishing of $\Ext^i_R(M,M \oplus R)$ for all $i>0$ forces $M$ to be projective \cite{AR75}. The Auslander-Reiten conjecture traces its roots to the representation theory of Artin algebras where it is intimately connected to Nakayama's conjecture and its generalized version. A significant special case of the Auslander-Reiten conjecture is the Tachikawa conjecture which posits that the Auslander-Reiten conjecture holds when $R$ is Cohen-Macaulay and $M=\w_R$ is a canonical module of $R$ \cite{avramov:edcrcvct}. Inspired by work of \c{S}ega \cite{sega:stfcar}, Avramov et. al. introduce the following which can be thought of as a 
version of the Auslander-Reiten conjecture for  Tor.

\begin{question}[{\cite{avramov:phcnr}}]\label{torpq}
Let $R$ be a commutative Noetherian ring. If, for a finitely generated $R$-module $M$, we have $\Tor^R_i(M,M)=0$ for $i \gg 0$, must $M$ have finite projective dimension?
\end{question}

Rings for which Question \ref{torpq} has an affirmative answer are called {\it $\Tor$-persistent}. Thus Question \ref{torpq} can be rephrased to ask whether every commutative Noetherian ring is $\Tor$-persistent. Several classes of rings are known to be $\Tor$-persistent, for example, complete intersection rings, Golod rings, and rings of small embedding codimension or multiplicity \cite{AB00,avramov:phcnr,Jo992,LM20}. The complete intersection case depends on support theory, which is only available in this setting, while the other known results depend on conditions for the vanishing of $\Tor_i^R(M,N)$ for all $i\gg 0$ and every $M$ and $N$, an approach that does not extend to the general case (see \cite{avramov:phcnr}).

The main purpose of this work is to provide evidence, and new insights, that this question may have an affirmative answer.  Our first main result provides a new class of rings which are Tor-persistent (see Theorem \ref{cubethm}).

\begin{theoremx}\label{cubethmintro}
If $(R,\mathfrak{m})$ is a local ring with $\m^3=0$, then $R$ is Tor-persistent.
\end{theoremx}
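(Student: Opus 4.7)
My plan is to exploit the exterior-square technology developed earlier in the paper, together with the strong multiplicative rigidity imposed by $\mathfrak{m}^3 = 0$, and argue by contradiction.

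First, by faithfully flat base change I may assume $R$ is complete with infinite residue field $k$; both reductions preserve $\mathfrak{m}^3 = 0$ and the vanishing of $\Tor^R_i(M,M)$. Suppose for contradiction that $M$ is a finitely generated $R$-module with $\pd_R M = \infty$ and $\Tor^R_i(M,M) = 0$ for all $i \gg 0$. Since Tor-persistence is already known for Golod rings and for rings of small embedding codimension or multiplicity \cite{Jo992, avramov:phcnr, LM20}, I may narrow attention to rings falling outside those classes; combined with $\mathfrak{m}^3 = 0$, this places strong structural restrictions on the shape of the minimal free resolution of $M$ via the structure theory of short local rings in the spirit of Lescot and \c{S}ega.

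The heart of the argument is a transfer: let $F_\bullet \to M$ be the minimal free resolution. I would show that vanishing of $\Tor^R_i(M,M)$ in high degrees propagates to vanishing, in high degrees, of the homology of the exterior-square complex $\bigwedge^2 F_\bullet$ (and of the symmetric-square complex). The condition $\mathfrak{m}^3 = 0$ is decisive here: any composite of two entries of the differential lies in $\mathfrak{m}^2$, which is annihilated by $\mathfrak{m}$ and hence socle-valued, so the natural filtration relating $F_\bullet \otimes_R F_\bullet$ to $\bigwedge^2 F_\bullet$ and $S^2 F_\bullet$ becomes rigid enough to transport Tor-vanishing from one factor to the other. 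With the transfer in hand, the exterior-square theorems proved earlier in the paper yield a Hilbert-series identity on $\bigwedge^2 F_\bullet$ that constrains the Betti numbers $\beta_i^R(M)$; together with the filtration $R \supset \mathfrak{m} \supset \mathfrak{m}^2 \supset 0$, this forces the $\beta_i^R(M)$ to be eventually zero, contradicting $\pd_R M = \infty$.

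The main obstacle is precisely the transfer step. In general, the vanishing of $\Tor^R_i(M,M)$ does not imply the vanishing of the homology of $\bigwedge^2 F_\bullet$, because the exterior and symmetric squares are genuine derived functors that interact subtly with $\otimes^L$, and the naive splitting $M \otimes M \cong \bigwedge^2 M \oplus S^2 M$ requires $2$ to be invertible. The role of $\mathfrak{m}^3 = 0$ is to collapse the higher correction terms in the relevant spectral sequence into the socle, making the transfer possible in a way that would fail under a weaker hypothesis such as $\mathfrak{m}^4 = 0$; navigating this collapse in all characteristics, and extracting from it the precise numerical identity needed to force boundedness of Betti numbers, is where I expect the technical work to concentrate.
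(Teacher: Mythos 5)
Your proposal correctly identifies the general theme (exterior squares interacting with $\Tor^R_i(M,M)$, with $\m^3=0$ pushing things into the socle), but the two steps on which everything hinges are left as aspirations, and neither is what actually makes the argument work. First, the ``transfer'' to vanishing of the homology of $\bigwedge^2 F_\bullet$ is not the mechanism: the paper never needs acyclicity of an exterior-square \emph{complex}, and indeed that route forces you into the characteristic-$2$ difficulties you flag without resolving. Instead, one passes to the second syzygy $L=\Omega^R_2(M)$ and works at the \emph{module} level: the hypotheses (via dimension shifting and a lemma of Huneke--\c{S}ega) give $\m(L\otimes_R L)=0$, so $L\otimes_R L$ and hence $\bigwedge^2_R(L)$ are $k$-vector spaces, and the antisymmetrization map $\iota_L\colon\bigwedge^2_R(L)\to L\otimes_R L$ is injective because it is the antisymmetrization map of a vector space --- valid in every characteristic, with no spectral sequence and no invertibility of $2$. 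Combining this with injectivity of $\varphi\otimes\varphi$ (where $0\to L\xrightarrow{\varphi}R^b\to N\to 0$), one gets an embedding of $\bigwedge^2_R(L)$ into $\Soc\bigl(\bigwedge^2_R(R^b)\bigr)$.

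Second, the numerical endgame is not a Hilbert-series identity forcing Betti numbers to vanish --- no such implication is available here. It is a dimension count: Huneke--\c{S}ega's formulas give $\mu_R(L)=\gamma b$, $\mu_R(\m)=2\gamma$, and $r(R)=\gamma^2$ where $\gamma=l_R(N)/\mu_R(N)-1$, and comparing $\dim_k\bigwedge^2_R(L)=\binom{\gamma b}{2}$ with $\dim_k\Soc\bigl(\bigwedge^2_R(R^b)\bigr)=\gamma^2\binom{b}{2}$ forces $\gamma=1$. This makes $R$ a codimension-two complete intersection, and the contradiction then comes from the already-known complete-intersection case (Jorgensen, Avramov--Buchweitz) --- a reduction your sketch omits entirely, and which cannot be skipped, since the exterior-square argument alone does not finish. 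Your reduction to rings ``outside the Golod and small-codimension classes'' is also not load-bearing in the actual proof. As written, the proposal is a research plan whose decisive steps (the transfer and the numerical contradiction) are acknowledged but not supplied, so it does not constitute a proof.
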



Our second main result provides some restrictions in the graded case; here $e_R(M)$ denotes the Hilbert-Samuel mutiplicity of the $R$-module $M$ (see Theorem \ref{bigthm}).

\begin{theoremx}\label{bigthmintro}

 Let $R=\oplus_{i\gs 0} R_i$ be a Noetherian standard graded  algebra over a field $R_0=k$ with  $\chara k \ne 2$. Let $M$ be a finitely generated graded $R$-module satisfying the following.
\begin{enumerate}
\item[$(1)$] $e_R(M)=e_R(R)$,

\item[$(2)$]  $\Tor_i^R(M,M)=0$ for $i>0$, and

\item[$(3)$]  $M \otimes_R M$ has no embedded primes.
\end{enumerate}
Then $M \cong R$.
\end{theoremx}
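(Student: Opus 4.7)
The plan is to exploit the direct-sum decomposition
\[
M \otimes_R M \;\cong\; \operatorname{S}^2_R(M) \oplus \wedge^2_R(M),
\]
available because $\chara k \neq 2$: the swap involution on $M \otimes_R M$ diagonalises into its symmetric and antisymmetric parts. The strategy is first to force $\wedge^2_R M = 0$, from which $M$ must be cyclic, and then to use the Tor-vanishing in (2) to conclude $M \cong R$.

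Since $\wedge^2_R M$ is a direct summand of $M \otimes_R M$, every element of $\Ass(\wedge^2_R M)$ lies in $\Ass(M \otimes_R M)$, and by hypothesis (3) each such prime is minimal in $\Supp(M \otimes_R M) = \Supp(M)$. It therefore suffices to prove $(\wedge^2_R M)_P = 0$ at every minimal prime $P$ of $\Supp(M)$. For this, I would apply the Tor-vanishing in (2) to a graded minimal free resolution of $M$, yielding the Hilbert series identity
\[
H_{M \otimes_R M}(t) \;=\; H_M(t)^2 / H_R(t),
\]
which with (1) gives $e_R(M \otimes_R M) = e_R(M)^2/e_R(R) = e_R(R)$. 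The associativity formula for multiplicities then produces the two sums $\sum_P \ell_{R_P}(M_P) e(R/P) = e_R(R)$ and $\sum_P \ell_{R_P}(M_P \otimes_{R_P} M_P) e(R/P) = e_R(R)$, taken over top-dimensional minimal primes $P$ of $R$. Combining these with the length inequality $\ell_{R_P}(M_P \otimes_{R_P} M_P) \cdot \ell_{R_P}(R_P) \geq \ell_{R_P}(M_P)^2$, valid in the Artinian local ring $R_P$, and a Cauchy--Schwarz argument forces the identities $\ell_{R_P}(M_P) = \ell_{R_P}(R_P)$ and $\ell_{R_P}(M_P \otimes_{R_P} M_P) = \ell_{R_P}(M_P)$ at each such $P$. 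The latter characterises cyclic modules over $R_P$, so $M_P$ is cyclic and $(\wedge^2_R M)_P = \wedge^2_{R_P}(M_P) = 0$.

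With $\wedge^2_R M = 0$, applying Nakayama to $\wedge^2_k(M/\fm M) = 0$ with $\fm = R_+$ yields $\mu(M) \leq 1$, so $M \cong (R/I)(-d)$ for some homogeneous ideal $I \subseteq \fm$ and some shift $d$. Tensoring the short exact sequence $0 \to I \to R \to R/I \to 0$ by $R/I$ and using the Tor-vanishing in (2) produces $\Tor_1^R(R/I, R/I) \cong I/I^2 = 0$; since $I$ is finitely generated and contained in $\fm$, graded Nakayama forces $I = 0$. Therefore $M \cong R(-d) \cong R$.

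The main obstacle is the local step: showing that the length equality $\ell_{R_P}(M_P \otimes_{R_P} M_P) = \ell_{R_P}(M_P)$ over the Artinian local ring $R_P$ forces $M_P$ to be cyclic. This rests on establishing the length inequality above with a sharp equality-case analysis, and additional care is needed when $R$ has non-reduced or non-equidimensional components — for example, to confirm that every minimal prime of $\Supp(M)$ is in fact a top-dimensional minimal prime of $R$. Hypotheses (2) and (3) are essential in preventing pathological behaviour at such primes, and the interplay between the exterior-power decomposition and the Tor-vanishing is the technical heart of the argument.
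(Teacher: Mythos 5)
Your overall architecture --- split $M\otimes_R M$ into symmetric and exterior parts using $\chara k\neq 2$, kill $\bigwedge^2_R(M)$ by a multiplicity count localized at the associated primes of $M\otimes_RM$, then finish with $I/I^2\cong\Tor_1^R(R/I,R/I)=0$ --- starts and ends the same way as the paper's proof, and the first and last steps are sound. The fatal problem is the middle step. The inequality $\ell_{R_P}(M_P\otimes_{R_P}M_P)\cdot\ell_{R_P}(R_P)\geq\ell_{R_P}(M_P)^2$ is not a valid general fact about Artinian local rings: over $A=k[x,y]/(x,y)^2$ (so $\ell(A)=3$), take $N=\bigoplus_{i=1}^4 A/(\lambda_i)$ for four pairwise non-proportional linear forms $\lambda_i$; then $\ell(N)=8$, while $A/(\lambda_i)\otimes_A A/(\lambda_j)$ has length $2$ if $i=j$ and $1$ otherwise, so $\ell(N\otimes_AN)=20$ and $\ell(N\otimes_AN)\,\ell(A)=60<64=\ell(N)^2$. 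So the Cauchy--Schwarz step has no foundation. To rescue it you would have to establish the inequality using $\Tor^{R_P}_{>0}(M_P,M_P)=0$, but over an Artinian local ring that is essentially the Tor-persistence problem all over again (finite projective dimension there already means free), so the argument becomes circular in spirit. A secondary gap: even granting all the equalities, your claim that $\ell(N\otimes_AN)=\ell(N)$ characterizes cyclic modules is asserted, not proved; the standard counting only gives $\mu(N)^2=\mu(N\otimes_AN)\leq\ell(N\otimes_AN)$, which does not force $\mu(N)=1$.

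The paper avoids localization entirely, and this is the idea you are missing. Using the Buchsbaum--Eisenbud second symmetric power of a minimal graded free resolution $F$ of $M$ (Fact~\ref{turmeric}), the Tor-vanishing yields an exact formula for the Hilbert series of the exterior square itself (Lemma~\ref{hilbertformula}), namely $H_{\bigwedge^2_R(M)}(t)=\frac{H_M(t)^2}{2H_R(t)}-\frac{H_R(t)H_M(t^2)}{2H_R(t^2)}$, and not merely your coarser identity $H_{M\otimes_RM}(t)=H_M(t)^2/H_R(t)$. Clearing denominators, passing to multiplicity polynomials, and evaluating at $t=1$ gives $\varepsilon_{\bigwedge^2_R(M)}(1)=e_R(M)^2/e(R)-e_R(M)\cdot e(R)/e(R)=0$ outright by hypothesis (1), which contradicts the fact that a nonzero $\bigwedge^2_R(M)$, being a direct summand of $M\otimes_RM$ with no embedded primes, must have maximal dimension and hence positive multiplicity. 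Your identity for $H_{M\otimes_RM}$ discards exactly the correction term $H_M(t^2)H_R(t)/H_R(t^2)$ that separates the symmetric and exterior squares, and that loss is what forces you into the unavailable local length inequality. I recommend reworking the middle of your argument around the finer Hilbert series formula; the rest can stay as is.
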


In fact, we prove Theorem \ref{bigthmintro} under more general assumptions. As a consequence of this result, we provide a commutative algebra  proof of the Tachi\-kawa  Conjecture for positively graded  rings over a field  of characteristic different from 2  (see Corollary \ref{cor200627a}). This result also follows from work of Zeng using techniques in  representation theory of Artin algebras  \cite{Zeng90} .

  Our approach to both of our main results provides an explanation as to why the vanishing of $\Tor_i^R(M,N)$ is special when $M=N$; namely, the vanishing of $\Tor^R_i(M,M)$ has consequences for the exterior and symmetric powers of $M$. For the cases we consider, these consequences come in the form of numerical constraints on the exterior and symmetric  squares, and are enough to conclude that the module in question is~free.

We conclude this section with some notation that we use in the subsequent ones. 
Through the remainder of the paper, let $(R,\m,k)$ be a commutative Noetherian ring which is either local or positively graded over the field $k$ with maximal homogeneous ideal $\m$.  If $R$ has a canonical module, it is denoted by $\omega_R$. We let $M$ be a finitely generated $R$-module; in the graded case we assume $M$ is homogeneous. We use $\nu_R(M)=\beta^R_0(M)$ for the minimal number of generators of $M$ and $l_R(M)$ for the length of $M$.   We write $\Omega^R_i(M)$ for the $i$th syzygy of  $M$ and $\beta^R_i(M)$ for the $i$th Betti number. We let $\codim R:=\nu_R(\m)-\dim R$ be the embedding codimension of $R$. 
We let $\iota_M:\bigwedge^2_R(M) \to M \otimes_R M$ be the antisymmetrization map defined on elementary wedges by $x_1 \bigwedge x_2 \mapsto x_1 \otimes x_2-x_2 \otimes x_1$.

\section{Tor-Persistence for Rings with Radical Cube Zero}\label{sec200627b}



The following main result contains Theorem \ref{cubethmintro} from the introduction.

\begin{thm}\label{cubethm}
Assume $(R,\m,k)$ is a local ring with $\m^3=0$.  If $M$ is an $R$-module such that $\Tor^R_i(M,M)=0$ for 
$2 \ls i \ls 5$, then $M$ is free.
\end{thm}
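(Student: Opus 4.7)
The plan is to argue by contradiction: suppose $M$ is not free. After the standard reductions---passing to the $\m$-adic completion, extending scalars to enlarge the residue field, and splitting off any free direct summand---it suffices to show that $M=0$ under the additional assumption that $M$ has no nonzero free direct summand. With these reductions in place, the minimality of a free resolution $F_\bullet$ of $M$, together with $\m^3=0$, yields the key structural vanishing $\m^2 \Omega^R_i(M) = 0$ for every $i \ge 1$, which severely constrains the Hilbert functions of all syzygies.

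The main technical step is to extract rigid length identities from the antisymmetrization map $\iota_M:\bigwedge^2_R(M) \to M \otimes_R M$ of the introduction. The vanishing $\Tor_2^R(M,M)=0$ should promote the natural right exact sequence
\[
\bigwedge\nolimits^2_R(M) \xrightarrow{\iota_M} M \otimes_R M \to S^2_R(M) \to 0
\]
into a short exact sequence, giving the length identity $l_R(M \otimes_R M) = l_R(\bigwedge^2_R M) + l_R(S^2_R M)$. Because $\m^2 \Omega^1 M = 0$, the three terms in this identity can each be expressed explicitly in terms of $\mu_R(M)$ and the short Hilbert-function data $(\mu_R(M), l_R(\m M/\m^2 M), l_R(\m^2 M))$ of $M$.

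By dimension shifting through the exact sequence $0 \to \Omega^1 M \to F_0 \to M \to 0$ twice, the assumed vanishing $\Tor_i^R(M,M)=0$ for $2 \le i \le 5$ propagates to the pairwise vanishings $\Tor_i^R(\Omega^1 M, \Omega^1 M) \cong \Tor_{i+2}^R(M,M) = 0$ for $i = 1,2,3$. Applying the same antisymmetrization analysis to $\Omega^1 M$---which also satisfies $\m^2 \Omega^1(\Omega^1 M) = 0$---produces a further layer of length identities. Combining these two layers of identities with the Hilbert-function constraints forced by $\m^3=0$ should yield a system in $\mu_R(M)$, $\beta_1^R(M)$, and $l_R(\m M/\m^2 M)$ that is inconsistent unless $\mu_R(M)=0$, delivering the desired contradiction.

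The principal obstacle will be the characteristic-free control of the symmetric and exterior squares: when $\chara k = 2$ the expected splitting $M \otimes_R M \cong \bigwedge^2_R M \oplus S^2_R M$ fails, so all length formulas must be extracted from the $\iota_M$ sequence and its variants without appealing to a direct-sum decomposition. Ensuring that these formulas remain clean enough to combine, across both $M$ and $\Omega^1 M$, into a single usable numerical identity is the delicate part of the argument, and is precisely where the upper bound $i \le 5$ on the required Tor-vanishing enters.
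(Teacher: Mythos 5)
Your overall strategy---contradiction, syzygies, the antisymmetrization map $\iota$, and a numerical clash---is recognizably the same skeleton as the paper's proof, but three of your load-bearing steps are either unjustified or insufficient. First, the claim that $\Tor_2^R(M,M)=0$ ``promotes'' $\bigwedge^2_R(M)\xrightarrow{\iota_M} M\otimes_R M\to S^2_R(M)\to 0$ to a short exact sequence has no basis: there is no general principle making $\iota_M$ injective from Tor-vanishing, and in characteristic $2$ (which you rightly flag) $\iota_M$ can fail to be injective for a general module. The paper gets injectivity only for $\iota_L$ where $L=\Omega_2^R(M)$, and only because $L\otimes_R L$ is a $k$-vector space; that fact is itself not free---it requires $\Tor_i^R(\Omega_1 M,L)=0$ for $i=1,2$ together with a lemma of Huneke--\c{S}ega. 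Once $L\otimes_R L$ is a vector space, so is $\bigwedge^2_R(L)$, and antisymmetrization of a vector space is injective in every characteristic; your proposed route of ``extracting length formulas from the $\iota_M$ sequence without a splitting'' never gets off the ground for $M$ itself.

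Second, your ``system of length identities'' is missing the structural inputs that make the numerics work: the paper leans on Huneke--\c{S}ega's theorem giving $\mu_R(L)=\gamma b$, $\mu_R(\m)=2\gamma$, and $r(R)=\gamma^2$ (with $\gamma=l_R(N)/\mu_R(N)-1$, $N=\Omega_1 M$, $b=\mu_R(N)$), and then compares $\dim_k\bigwedge^2_R(L)$ with $\dim_k\soc(\bigwedge^2_R(R^b))$ via an embedding obtained from a commutative square involving $\varphi\otimes\varphi$. Raw Hilbert-function bookkeeping for $M$ and $\Omega_1 M$ will not pin these invariants down. Third, and most importantly, the numerical argument cannot end the proof the way you claim: the inequality it produces forces only $\gamma=1$, i.e., $R$ is Gorenstein with $\mu_R(\m)=2$, hence a codimension-two complete intersection. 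That case is genuinely consistent with all the length identities and must be killed by a completely different mechanism (complexity bounds over complete intersections and the Auslander--Buchweitz/Jorgensen results forcing $\Tor_i^R(M,M)=0$ for all $i>0$, whence $M$ is free). Your proposal has no counterpart to this endgame, so even if the earlier steps were repaired it would not conclude.
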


\begin{proof}
We may assume $M$ is nonzero. As a notational convenience, we set $\gamma_R(M)=\frac{l_R(M)}{\nu_R(M)}-1$. We note that $\gamma_R(M) \gs 0$ with equality if and only if $M \cong k^n$ for some $n$. 

Suppose $M$ is not free. Set $N=\Omega^R_1(M)$, $L=\Omega_2^R(M)$, and  $b=\nu_R(N)$. Let $\varphi$ be the map fitting in the natural exact sequence $0 \rightarrow L \xrightarrow{\varphi} R^b \rightarrow N \rightarrow 0$. Since $N \hookrightarrow \m R^{\nu_R(M)}$ and since $\m^3=0$, we have $\m^2N=0$. Similarly, we have $\m^2L=0$.  By dimension shifting, $\Tor^R_1(N,L)=0$, and so we have $\m(L \otimes_R L)=0$ by 
 \cite[Lemma 1.4]{HS04}.  Further, we have $\Tor^R_i(N,N)=0$ for $i=1,2,3$ so \cite[Theorem 2.5]{HS04} gives 
\begin{enumerate}
\item \label{item200627a} $\nu_R(L)=\gamma_R(N)b$,
\item \label{item200627b} $\nu_R(\m)=2\gamma_R(N)$, and
\item \label{item200627c} $r(R)=\gamma_R(N)^2$, where $r(R):=\dim_k \Soc(R)$ is the  type of $R$. 
\end{enumerate}
The map $\iota_{L \otimes_R k}\colon \bigwedge^2_k(L \otimes_R k)\to(L \otimes_R k)\otimes_k(L \otimes_R k)$ is injective because $L \otimes_R k$ is a $k$-vector space, and this map 
is naturally identified with $\iota_L \otimes \id_k\colon \bigwedge^2_R(L) \otimes_R k\to(L\otimes_RL)\otimes_Rk$.
As $L \otimes_R L$ is a $k$-vector space, so is its quotient $\bigwedge^2_R(L)$, 
hence $\iota_L \otimes \id_k$ is naturally identified with $\iota_L$.  In particular, $\iota_L$ is injective.

Now, we have $\varphi \otimes \varphi=(\varphi \otimes \id_{R^b}) \circ (\id_L \otimes \varphi)$.  The map $\varphi \otimes \id_{R^b}$ is injective since $\Tor_1^R(N,R^b)=0$, and $\id_L \otimes_R \varphi$ is injective since $\Tor^R_1(L,N)=\Tor^R_4(M,M)=0$. Thus $\varphi \otimes \varphi$ is also injective.  

Next, we have the following commutative diagram
\[\begin{tikzcd}
\bigwedge^2_R(L) \arrow[d, "\bigwedge^2_R(\varphi)"'] \arrow[r, "\iota_L"] & L \otimes_R L \arrow[d, "\varphi \otimes \varphi"] \\
\bigwedge^2_R(R^b) \arrow[r, "\iota_{R^b}"]                             & R^b \otimes_R R^b                                 
\end{tikzcd}\]
Since $\varphi \otimes \varphi$ and $\iota_L$ are both injective,
the commutivity of the diagram forces $\bigwedge^2_R(\varphi)$ to be injective.  Since $\bigwedge^2_R(L)$ is a $k$-vector space, it must thus embed in the socle of $\bigwedge^2_R(R^b)$.

The vector space dimension of $\bigwedge^2_R(L)$ is 
$$\displaystyle {\nu_R(L) \choose 2}={\gamma_R(N)b \choose 2}=\dfrac{\gamma_R(N)b(\gamma_R(N)b-1)}{2}$$ while that of $\soc(\bigwedge_R^2(R^b))$ is 
$$\displaystyle r(R){b \choose 2}=\gamma_R(N)^2\left(\dfrac{b(b-1)}{2}\right).$$ It follows that we must have 

\[\dfrac{\gamma_R(N)b(\gamma_R(N)b-1)}{2} \ls \gamma_R(N)^2\left(\dfrac{b(b-1)}{2}\right).\]  
If $\gamma_R(N)=0$, then $N$ is a $k$-vector space which cannot be, since $N$ has infinite projective dimension and $\Tor^R_1(N,N)=0$.
Therefore, as $b\neq 0$, we have 
\[\gamma_R(N)b-1 \ls \gamma_R(N)b-\gamma_R(N)\]
which forces $\gamma_R(N)=1$.  Thus $R$ is Gorenstein with $\nu_R(\m)=2$, by items~\eqref{item200627a}--\eqref{item200627b} above, and so $R$ is also a complete intersection. 

Let $\cx_R(M)$ be the complexity of $M$, since $R$ is a complete intersection, we  have $\cx_R(M) \ls \codim R=2$; see e.g. 
 \cite[Theorem 1.1 and subsequent paragraph]{Jo99}.
Thus, \cite[Proposition 2.3]{Jo99} forces $\Tor^R_i(M,M)=0$ for $i>0$.  By \cite[Theorem 4.2]{AB00}, this contradicts the fact that $M$ is not free. The  result follows.
\end{proof}

\section{Some Restrictions for Tor-persistence}\label{sec200627a}

Unless otherwise stated, throughout this section we let $R=\oplus_{i\gs 0} R_i$ be a positively graded algebra over a field $R_0=k$ and $\fm=\oplus_{i> 0} R_i$   its homogeneous maximal ideal. 
Let  $M=\oplus_{i\in \ZZ} M_i$
 be a finitely generated graded  $R$-module. 
The {\it Hilbert series} of $M$ is
  $$H_M(t)=\sum_{i\in \ZZ} (\dim_k M_i) t^i. $$   
  We recall that if $M\neq 0$, and if $\underline{x}=(x_1,\dots,x_{\dim M})$ is a homogeneous system of parameters on $M$ with $\deg x_i=a_i$, then there exists a Laurent polynomial $\varepsilon^{\underline{x}}_M(t)\in \ZZ[t, t^{-1}]$  with $\varepsilon^{\underline{x}}_M(1)>0$ such that $H_M(t)$ can be written as 
  $$H_M(t)=\frac{\varepsilon^{\underline{x}}_M(t)}{\prod_{i=1}^{\dim M}(1-t^{a_i})}$$ \cite[Proposition  4.4.1 and Remark 4.4.2]{BH93}. 

 We denote by $e_R(M)$ the {\it Hilbert-Samuel multiplicity} of $M$ $$e_R(M)=\lim_{n\to \infty }\frac{(\dim R)! l_R(M/\fm^nM)}{n^{\dim R}};$$ we also write $e(R)$ for $e_R(R)$. 
  
  For a graded complex  of finite rank graded free $R$-modules
$$X=\cdots X_{i+1}\to X_i\to X_{i-1}\to \cdots  $$ 
with $X_i=\oplus_{j\in \ZZ}R(-j)^{b_{i,j}}$, we denote by $$P_X(t,z)=\sum_{i,j\in 
\ZZ}  b_{i,j}t^jz^i $$ the {\it (graded) Poincar\'e series} of $X$. If  
 $F$ is a graded free resolution of $M$, then  we 
set  $P_M(t,z):=P_F(t,z)$.  The  additivity of length gives the following
comparison of Hilbert and Poincar\'e series.

\begin{fact}\label{HilbertPoincare}
For $R$ and $M$ as above, we have 
$H_M(t)=H_R(t)P_M(t,-1).$
\end{fact}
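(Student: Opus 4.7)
The plan is to extract the identity from a minimal graded free resolution of $M$ by combining additivity of $k$-vector space dimension in exact sequences with a degree-by-degree finiteness argument.

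First, fix a minimal graded free resolution $F_\bullet$ of $M$ with $F_i = \bigoplus_j R(-j)^{b_{i,j}}$. Since $H_{R(-j)}(t) = t^j H_R(t)$, one has $H_{F_i}(t) = H_R(t) \sum_j b_{i,j} t^j$, so formally $\sum_i (-1)^i H_{F_i}(t) = H_R(t) P_M(t,-1)$. The content of the fact is therefore to justify that this alternating sum computes $H_M(t)$.

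The key step is to observe that for each fixed internal degree $n$, only finitely many $(F_i)_n$ are nonzero. Indeed, if $e$ is a minimal generator of $F_{i+1}$ of degree $\indeg(F_{i+1})$, then in a minimal resolution $d_{i+1}(e) \in \mathfrak{m} F_i$ must be nonzero (otherwise $e$ would be a cycle, hence a boundary lying in $\mathfrak{m} F_{i+1}$, contradicting minimality), and since $\mathfrak{m}$ is generated in positive degrees this forces $\indeg(F_{i+1}) > \indeg(F_i)$. Thus $\indeg(F_i) \to \infty$, and for each $n$ the resolution restricts to a finite exact complex of finite-dimensional $k$-vector spaces $0 \to (F_s)_n \to \cdots \to (F_0)_n \to M_n \to 0$, with $s$ depending on $n$.

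Finally, additivity of dimension in this finite exact sequence gives $\dim_k M_n = \sum_i (-1)^i \dim_k (F_i)_n$ for every $n$. Multiplying by $t^n$ and summing over $n$ (the interchange of the sums over $i$ and $n$ is legitimate precisely because the sum over $i$ is finite in each internal degree) yields $H_M(t) = \sum_i (-1)^i H_{F_i}(t) = H_R(t) P_M(t,-1)$, as claimed. The only subtle point is the convergence/interchange of summation in the potentially infinite resolution, which is handled by the degree-by-degree finiteness established above.
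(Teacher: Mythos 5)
Your proof is correct and is exactly the argument the paper is gesturing at when it says ``the additivity of length gives the following'': apply additivity of $\dim_k$ to the degree-$n$ strand of a minimal graded free resolution, using that minimality forces $\indeg(F_i)$ to be strictly increasing so each strand is a finite exact sequence of finite-dimensional vector spaces. You have simply filled in the details the paper omits; no further comment is needed.
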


We now describe a construction of Buchsbaum-Eisenbud~\cite{buchsbaum:asffr}, 
following the presentation of Frankild-Sather-Wagstaff-Taylor\cite{FSWT08}.
Assume for the remainder of this paragraph
that $\chara k \ne 2$. Let $X$ be as above, and let $\alpha^X:X\otimes_R X\to X\otimes X$ be the map  defined on homogeneous generators by 
$$\alpha^X(x\otimes x')=x\otimes x'-(-1)^{|x||x'|}x'\otimes x.$$
Let $S^2_R(X)$ be the complex $\Coker(\alpha^X)$ and call it the {\it second symmetric power of X}.

 In the following statement we summarize some important properties of $S^2_R(X)$. We remark that although the statements in \cite{FSWT08} are in the local case, the arguments therein readily extend to account for the grading in $R$.
 
\begin{fact}[{\cite[3.8, 4.1, 3.12]{FSWT08}}]\label{turmeric}
Assume  $\chara k \ne 2$. Let $X$ be a graded complex of finite rank graded free $R$-modules.  
\begin{enumerate}
\item \label{turmeric1} The following exact sequences are split exact.
\begin{gather*}
0\to \Ker(\alpha^X)\to 
X\otimes_R X\to \Image(\alpha^X)\to 0\\
0\to \Image(\alpha^X)\to X\otimes_R X \to S^2_R(X)\to 0
\end{gather*}
\item \label{turmeric2}  $H_{0}( S^2_R(X))\cong S^2_R(H_0(X))$.
\item \label{turmeric3}  $P_{S^2_R(X)}(t,z)=\frac{1}{2}\Big[P_{X}(t,z)^2+P_{X}(t^2,-z^2)\Big]$
\end{enumerate}
\end{fact}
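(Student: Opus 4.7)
The plan is to exploit the involutive nature of the twist map together with the invertibility of $2$ in $R$, which follows from $\chara k \ne 2$. Define $\tau\colon X \otimes_R X \to X \otimes_R X$ on homogeneous generators by $\tau(x \otimes x') = (-1)^{|x||x'|} x' \otimes x$; a direct check shows $\tau$ is a chain map with $\tau^2 = \id$, and $\alpha^X = \id - \tau$. Since $\tfrac{1}{2} \in R$, the operators $e_{\pm} = \tfrac{1}{2}(\id \pm \tau)$ are complementary idempotent chain endomorphisms, yielding a direct sum decomposition $X \otimes_R X = (X\otimes_R X)^+ \oplus (X\otimes_R X)^-$ into the $(\pm 1)$-eigenspaces of $\tau$. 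Because $\alpha^X = 2e_-$, one reads off $\Ker(\alpha^X) = (X\otimes_R X)^+$ and $\Image(\alpha^X) = (X\otimes_R X)^-$, establishing part (1) and identifying $S^2_R(X) = \Coker(\alpha^X) \cong (X\otimes_R X)^+$ as complexes.

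For part (2), the idempotents $e_{\pm}$ are natural in $X$, so the augmentation $X \twoheadrightarrow H_0(X)$ intertwines the $\tau$-splitting of $X \otimes_R X$ with the classical symmetric/antisymmetric splitting of $H_0(X) \otimes_R H_0(X)$. Combined with the elementary K\"unneth identification $H_0(X \otimes_R X) \cong H_0(X) \otimes_R H_0(X)$ (which follows directly from the right-exactness of $\otimes$ applied to the bounded-below free complex $X$), the induced isomorphism restricts to $H_0(S^2_R(X)) \cong S^2_R(H_0(X))$.

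For part (3), compute Poincar\'e series by separating the graded dimension and the graded trace of $\tau$ on $X \otimes_R X$. Evidently $P_{X\otimes_R X}(t,z) = P_X(t,z)^2$. Choosing a homogeneous basis of $X$, the twist sends a basis element $v_\alpha \otimes v_\beta$ to a scalar multiple of $v_\beta \otimes v_\alpha$, so it contributes to the weighted trace only when $\alpha = \beta$; for a generator $v_\alpha$ of bidegree $(i_\alpha, j_\alpha)$, the diagonal element $v_\alpha \otimes v_\alpha$ sits in bidegree $(2i_\alpha, 2j_\alpha)$ with $\tau$-eigenvalue $(-1)^{i_\alpha}$. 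Summing over the basis yields the weighted trace
\[
\sum_{i,j} b_{i,j}(-1)^i t^{2j} z^{2i} = P_X(t^2, -z^2),
\]
so the $\pm 1$ eigenspaces satisfy $P_{(X\otimes_R X)^\pm}(t,z) = \tfrac{1}{2}\bigl(P_X(t,z)^2 \pm P_X(t^2,-z^2)\bigr)$; the identification $S^2_R(X) \cong (X\otimes_R X)^+$ from part (1) delivers the claimed formula.

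The main obstacle will be making the K\"unneth-style reduction in part (2) genuinely rigorous, namely verifying that the augmentation $X \twoheadrightarrow H_0(X)$ respects the $\tau$-eigenspace splitting so that $e_+$ descends to the classical symmetrization idempotent on $H_0(X) \otimes_R H_0(X)$, whose image is precisely $S^2_R(H_0(X))$. Once this compatibility is in place, parts (1) and (3) are formal consequences of the eigenspace decomposition and a weighted-trace count.
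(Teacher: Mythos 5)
The paper gives no proof of this Fact---it is quoted verbatim from [FSWT08]---and your eigenspace argument is correct and is essentially the argument used there: since $2$ is a unit, $\tfrac{1}{2}\alpha^X$ (your $e_-$) is an idempotent chain endomorphism of $X\otimes_R X$, which yields the split exact sequences in (1) and the identification $S^2_R(X)\cong\Image(e_+)$, and the identity in (3) is exactly the weighted-trace count for the involution $\tau$ (diagonal basis elements $v_\alpha\otimes v_\alpha$ contributing $(-1)^{i_\alpha}t^{2j_\alpha}z^{2i_\alpha}$). The compatibility you flag as the main obstacle in (2) is unproblematic: applying the right-exact functor $H_0$ to the split-exact presentation of $S^2_R(X)$ identifies $H_0(S^2_R(X))$ with the cokernel of $H_0(\alpha^X)=\alpha^{H_0(X)}$ on $H_0(X)\otimes_R H_0(X)\cong H_0(X\otimes_R X)$, where the Koszul sign is trivial in homological degree zero, and that cokernel is $S^2_R(H_0(X))$ by definition.
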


Now, assume $\chara k \ne 2$ and consider the antisymmetrization map  $ \iota_M\colon \bigwedge^2_R(M)\to M\otimes_R M$
defined in the introduction.
This map is  a split injection where the splitting map is given by $x\otimes y \mapsto \frac{1}{2} x\bigwedge y$. Since $\Coker(\iota_M)=S^2_R(M)$,  we have the following fact. 

\begin{fact}\label{split}
Assume  $\chara k \ne 2$, $M\otimes_R M \cong S^2_R(M)\oplus  \bigwedge^2_R (M)$
\end{fact}

The following lemma is essential in the proof of our main result.

\begin{lemma}\label{hilbertformula}
Assume  $\chara k \ne 2$. If $M$ is a graded $R$-module such that $\Tor_i^R(M,M)=0$ for every $i>0$, then we have
 $$H_{S^2_R(M)}(t) = \frac{H_{M}^2(t)}{2H_{R}(t)}+ \frac{H_{M}(t^2)H_{R}(t)}{2H_{R}(t^2)}\qquad\mbox{and} \qquad H_{\bigwedge^2_R (M)}(t) = \frac{H_{M}^2(t)}{2H_{R}(t)}- \frac{H_{R}(t)H_{M}(t^2)}{2H_{R}(t^2)}.$$
\end{lemma}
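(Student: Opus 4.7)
The plan is to derive both Hilbert series formulas from a graded free resolution $F$ of $M$, leveraging the decomposition of $F\otimes_R F$ afforded by Fact~\ref{turmeric}. The hypothesis $\Tor_i^R(M,M)=0$ for all $i>0$ is precisely the statement that $F\otimes_R F$ is a graded free resolution of $M\otimes_R M$; in particular all of its positive-degree homology vanishes.

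Next, I would apply Fact~\ref{turmeric}\eqref{turmeric1} with $X=F$ to decompose $F\otimes_R F$ as a direct sum of complexes whose cokernel summand is $S^2_R(F)$. Because a direct summand of a complex with no positive-degree homology also has no positive-degree homology, $S^2_R(F)$ is acyclic above degree $0$; combined with Fact~\ref{turmeric}\eqref{turmeric2}, this identifies $S^2_R(F)$ as a graded free resolution of $S^2_R(M)$. Now Fact~\ref{turmeric}\eqref{turmeric3} evaluated at $z=-1$ yields
$$P_{S^2_R(M)}(t,-1)=\tfrac{1}{2}\bigl[P_M(t,-1)^2+P_M(t^2,-1)\bigr].$$
Multiplying by $H_R(t)$ and using Fact~\ref{HilbertPoincare} to substitute $P_N(s,-1)=H_N(s)/H_R(s)$ for $N=M$ produces the asserted formula for $H_{S^2_R(M)}(t)$ after elementary algebra.

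For the exterior square, I would bypass a second resolution argument by invoking the canonical decomposition $M\otimes_R M\cong S^2_R(M)\oplus\bigwedge^2_R(M)$, which is valid precisely because $\chara k\neq 2$. The Hilbert series of $M\otimes_R M$ comes directly from the resolution $F\otimes_R F$: since $P_{F\otimes_R F}(t,z)=P_F(t,z)^2$, one obtains
$$H_{M\otimes_R M}(t)=H_R(t)\,P_M(t,-1)^2=\frac{H_M(t)^2}{H_R(t)}.$$
Subtracting the formula for $H_{S^2_R(M)}(t)$ then recovers the claim for $H_{\bigwedge^2_R(M)}(t)$.

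The only nontrivial step is justifying the acyclicity of $S^2_R(F)$ above degree $0$, which rests on the splitting in Fact~\ref{turmeric}\eqref{turmeric1} together with the Tor-vanishing hypothesis; the characteristic assumption enters here through Fact~\ref{turmeric}, while everything subsequent reduces to formal manipulation of power series.
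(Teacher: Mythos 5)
Your proposal is correct and follows essentially the same route as the paper: use the Tor-vanishing to make $F\otimes_R F$ a resolution of $M\otimes_R M$, deduce from the splittings in Fact~\ref{turmeric}\eqref{turmeric1}--\eqref{turmeric2} that $S^2_R(F)$ resolves $S^2_R(M)$, evaluate the Poincar\'e series identity at $z=-1$, and obtain the exterior-square formula by subtracting via the decomposition $M\otimes_R M\cong S^2_R(M)\oplus\bigwedge^2_R(M)$. No gaps.
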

\begin{proof}
Let $F$ be a minimal graded free resolution of $M$. By the vanishing of Tor assumption,
the complex $F\otimes_R F$ is acyclic and therefore a minimal free resolution of $M\otimes_R M$. Therefore, Fact~\ref{turmeric}\eqref{turmeric1}--\eqref{turmeric2} imply 
that $S^2_R(F)$ is acyclic and  a minimal free resolution of $S^2_R(M)$. 
From Facts~\ref{HilbertPoincare}  and~\ref{turmeric}\eqref{turmeric3} we obtain
\begin{align*}
H_{S^2_R(M)}(t)=H_R(t)P_{S^2_R(F)}(t,-1)
&=\frac{H_R(t)}{2}\Big[P_{F}(t,-1)^2+P_{F}(t^2,-1)\Big]\\
&=\frac{H_{M}^2(t)}{2H_{R}(t)}+\frac{H_{R}(t)H_R(t^2)P_{F}(t^2,-1)}{2H_{R}(t^2)}\\
&=  \frac{H_{M}^2(t)}{2H_{R}(t)}+ \frac{H_{R}(t)H_{M}(t^2)}{2H_{R}(t^2)}.
\end{align*}
We note that 
$$H_{M\otimes_R M}(t)=H_R(t)P_{F\otimes_R F}(t,-1)=H_R(t)P_{F}^2(t,-1)=\frac{H_{M}^2(t)}{H_{R}(t)}.$$
Thus, it suffices to show $H_{M\otimes_R M}(t)=H_{S^2_R(M)}(t)+ H_{\bigwedge^2_R (M)}(t)$, which follows from Fact \ref{split}.
\end{proof}

We are now ready to prove Theorem \ref{bigthmintro}.

\begin{thm}\label{bigthm}

Let $R$ be a Noetherian positively graded $k$-algebra with $\chara k \ne 2$. Let $M$ be  a finitely generated graded $R$-module such that $\dim(M)=\dim(R)$ and  satisfying the following:

\begin{enumerate}

\item[$(1)$]  For some homogeneous system of parameters $\underline{x}$ of $R$, $\varepsilon^{\underline{x}}_M(1)=\varepsilon^{\underline{x}}_R(1)$.

\item[$(2)$]  $\Tor_i^R(M,M)=0$ for $i>0$, and

\item[$(3)$]  $M \otimes_R M$ has no embedded primes.
\end{enumerate}
Then $M \cong R$.
\end{thm}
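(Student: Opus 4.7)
The plan is to use Lemma~\ref{hilbertformula} together with the three hypotheses to show $\bigwedge^2_R(M) = 0$, deduce that $M$ is cyclic, and then close using the vanishing of $\Tor_1^R(M,M)$. The main difficulty will be in the middle step.

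First, I will show $\dim\bigwedge^2_R(M) < \dim R$. Since $e_R(M) = e(R) > 0$, the module $M$ has maximal dimension and so $H_R(t)$ and $H_M(t)$ share the common denominator $\prod_i(1-t^{a_i})$. Condition $(1)$ gives $\varepsilon_M(1) = \varepsilon_R(1) = e(R)$. Substituting into the formula for $H_{\bigwedge^2_R(M)}(t)$ from Lemma~\ref{hilbertformula} produces
\[
\varepsilon_{\bigwedge^2_R(M)}(t) \;=\; \frac{\varepsilon_M(t)^2\,\varepsilon_R(t^2) \;-\; \varepsilon_R(t)^2\,\varepsilon_M(t^2)}{2\,\varepsilon_R(t)\,\varepsilon_R(t^2)},
\]
whose numerator vanishes at $t=1$. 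Since $e_R(N) = \varepsilon_N(1)$ when $N$ has maximal dimension, this forces $\dim\bigwedge^2_R(M) < \dim R$.

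Second, I will use $(3)$ to upgrade this to $\bigwedge^2_R(M) = 0$. Suppose not, and take $P\in\Ass\bigwedge^2_R(M)$. Because $\bigwedge^2_R(M)$ is a graded direct summand of $M\otimes_R M$, $P\in\Ass(M\otimes_R M)$, and condition $(3)$ places $P$ among the minimal primes of $\Supp(M\otimes_R M) = \Supp M$. At the same time $P\in\Supp\bigwedge^2_R(M)$ gives $\dim R/P \le \dim\bigwedge^2_R(M) < \dim R$. To reach a contradiction I expect to use $(1)$ and $(2)$ to show that $\Supp M$ is equidimensional of dimension $\dim R$, so that no minimal prime of $\Supp M$ can have strictly smaller dimension; this should follow by localizing the Tor-vanishing at the hypothetical low-dimensional $P$ and comparing lengths of $M_P\otimes_{R_P} M_P$ and $M_P$ via a local analogue of the Hilbert-series computation that underlies Lemma~\ref{hilbertformula}.

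Third, with $\bigwedge^2_R(M) = 0$, base-change along $R\to k$ gives $\bigwedge^2_k(M\otimes_R k) = 0$, so $\mu_R(M)\le 1$. Since $M\neq 0$, we have $\mu_R(M) = 1$ and $M\cong R/I$ for some graded ideal $I\subseteq\mathfrak{m}$. Tensoring $0\to I\to R\to R/I\to 0$ with $R/I$ yields $\Tor_1^R(R/I,R/I)\cong I/I^2$, which vanishes by $(2)$; graded Nakayama then forces $I=0$, so $M\cong R$. The main obstacle, as noted, is the second step: upgrading "no embedded primes" together with the strict dimension inequality into an actual contradiction, since the hypothesis alone permits low-dimensional minimal primes in $\Supp M$.
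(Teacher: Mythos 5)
Your proposal is, up to the order of the two middle deductions, the paper's own proof: the same splitting $M\otimes_R M\cong\bigwedge^2_R(M)\oplus S^2_R(M)$, the same multiplicity-polynomial identity from Lemma~\ref{hilbertformula} evaluated at $t=1$, and the same endgame ($\bigwedge^2_R(M)=0$ forces $M$ cyclic, then $I/I^2\cong\Tor_1^R(R/I,R/I)=0$ forces $I=0$). The paper, however, runs the middle steps in the opposite order: it assumes $\bigwedge^2_R(M)\neq 0$, invokes condition $(3)$ \emph{first} to conclude that $\bigwedge^2_R(M)$ has maximal dimension (its associated primes lie in $\Ass(M\otimes_R M)$, which by $(3)$ consists only of minimal primes of $\Supp(M\otimes_R M)=\Supp(M)$), and only then clears denominators and sets $t=1$ to get $\varepsilon_{\bigwedge^2_R(M)}(1)=0$, contradicting $\varepsilon_N(1)>0$ for a nonzero module $N$ of maximal dimension. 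With that ordering there is no separate "second step" to carry out: the contradiction is delivered entirely by the evaluation at $t=1$, and no localization is needed.

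That said, your hesitation is aimed at exactly the right spot, and your proposed repair is not the way to go. The inference "a nonzero direct summand of a module with no embedded primes has dimension $\dim R$" requires the minimal primes of $\Supp(M)$ to all have coheight $\dim R$, i.e., equidimensionality of $\Supp(M)$; "no embedded primes" alone does not supply this (e.g.\ $R/(x)\oplus R/(y,z)$ over $k[x,y,z]$ has no embedded primes but non-equidimensional support). The paper dispatches this in a single sentence without the justification you are asking for. Your suggested fix --- localizing the Tor-vanishing at a hypothetical low-dimensional minimal prime $P$ and "comparing lengths" --- would leave you with a finite-length $R_P$-module $M_P$ satisfying $\Tor_i^{R_P}(M_P,M_P)=0$ for $i>0$ over a local ring of positive dimension, and ruling that out is essentially a local Tor-persistence problem, not something a Hilbert-series analogue will give you; there is no graded structure left after localizing. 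So to match the paper you should simply make the associated-primes argument the paper makes (summand $\Rightarrow$ $\Ass\bigl(\bigwedge^2_R(M)\bigr)\subseteq\Ass(M\otimes_R M)$, no embedded primes $\Rightarrow$ these are minimal in $\Supp(M)$, hence of maximal coheight) and then conclude via $t=1$; if you want the write-up fully airtight you should flag, as the paper does not, that this step is using maximal coheight of the minimal primes of $\Supp(M)$.
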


\begin{proof}
We proceed by contradiction. Suppose that $\bigwedge^2_R(M) \ne 0$ so,  by Fact \ref{split}, we have $M \otimes_R M \cong \bigwedge^2_R(M) \oplus S^2_R(M)$. Since $M \otimes_R M$ has no embedded primes, it follows that $\bigwedge^2_R(M)$ is has dimension $\dim(R)$. By Lemma \ref{hilbertformula}, we have 
\[H_R(t)H_R(t^2)H_{\bigwedge^2_R(M)}(t)=H_M(t)^2H_R(t^2)-H_M(t^2)H_R(t)^2.\] As each module in question has maximal dimension, we may clear denominators to obtain a formula for multiplicity polynomials with respect to any system of parameters $\underline{x}$ of $R$ 
\[\varepsilon^{\underline{x}}_R(t)\varepsilon^{\underline{x}}_R(t^2)\varepsilon^{\underline{x}}_{\bigwedge^2_R(M)}(t)=\varepsilon^{\underline{x}}_M(t)^2\varepsilon^{\underline{x}}_R(t^2)-\varepsilon^{\underline{x}}_M(t^2)\varepsilon^{\underline{x}}_R(t)^2.\]
Evaluating these at $t=1$ shows that $\varepsilon_{\bigwedge^2_R(M)}(1)=0$, a contradiction.  Therefore, $\bigwedge^2_R(M)=0$, and it follows that $M$ is cyclic. Thus $M \cong R/I$ for some homogeneous ideal $I$.  As $I/I^2 \cong \Tor^R_1(R/I,R/I) \cong \Tor^R_1(M,M)=0$, it follows that $I=0$, concluding the~proof. \end{proof}

\begin{remark}

Hypothesis $(1)$ in Theorem \ref{bigthm} follows from the condition that $e_R(M)=e(R)$ in a number of cases, e.g. if $\m^n$ admits a homogeneous minimal reduction for some $n>0$ (which occurs, for instance, if $R$ is standard graded), if $R$ is Artinian, if $M$ has constant rank, or if $R$ is Cohen-Macaulay and $M=\w_R$ \cite[Corollary 4.4.6]{BH93}.
When $M$ has rank, hypothesis $(1)$ in Theorem \ref{bigthm} is equivalent to $M$ having rank $1$. Frequently, such modules are ideals, e.g., if $R$ is a domain and $M$ is torsion-free. 

\end{remark}

In what follows, we set $(-)^{\vee}=\Hom_R(-,\w_R)$

\begin{cor}\label{extcor}

Let $R$ be a positively graded Cohen-Macaulay $k$-algebra with $\chara k \ne 2$. If $M$ is a finitely generated graded maximal Cohen-Macaulay $R$-module such that:

\begin{enumerate}

\item[$(1)$]   For some homogeneous system of parameters $\underline{x}$ of $R$, $\varepsilon^{\underline{x}}_M(1)=\varepsilon^{\underline{x}}_R(1)$.

\item[$(2)$]  $\Ext^i_R(M,M^{\vee})=0$ for $i>0$.
\end{enumerate}
Then $M \cong R$.

\end{cor}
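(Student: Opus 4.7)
The plan is to deduce the three hypotheses of Theorem~\ref{bigthm} from the two hypotheses of the corollary. Hypothesis~(1) of the corollary matches hypothesis~(1) of Theorem~\ref{bigthm}, so I need only establish: (a) $\Tor^R_i(M,M) = 0$ for all $i > 0$; and (b) $M \otimes_R M$ has no embedded primes. Both follow from the stronger claim that $M \otimes_R M$ is maximal Cohen-Macaulay (MCM).

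The key tool is the derived Hom-tensor adjunction combined with the MCM hypothesis on $M$. Let $F_\bullet \to M$ be a minimal graded free resolution. Since $M$ is MCM, each $F_i \otimes_R M \cong M^{\beta^R_i(M)}$ is MCM, so
\[
\operatorname{RHom}_R(F_i \otimes_R M,\, \omega_R) \;\cong\; (M^\vee)^{\beta^R_i(M)}
\]
is concentrated in degree zero. Consequently the cochain complex $C^\bullet := \Hom_R(F_\bullet \otimes_R M, \omega_R) \cong \Hom_R(F_\bullet, M^\vee)$ represents $\operatorname{RHom}_R(M \otimes^{L}_R M, \omega_R)$ in $D(R)$. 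On the other hand, the cohomology of $C^\bullet$ is $\Ext^\bullet_R(M, M^\vee)$, which by hypothesis~(2) is concentrated in degree zero, equal there to $\Hom_R(M, M^\vee) \cong (M \otimes_R M)^\vee$. Hence $C^\bullet \simeq (M \otimes_R M)^\vee$ in $D(R)$.

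Applying $\operatorname{RHom}_R(-, \omega_R)$ to this quasi-isomorphism and computing the left-hand side termwise (valid because every term of $C^\bullet$ is MCM, so termwise dualization recovers the derived functor) yields $F_\bullet \otimes_R M$, a representative of $M \otimes^{L}_R M$. I therefore obtain
\[
M \otimes^{L}_R M \;\simeq\; \operatorname{RHom}_R\bigl((M \otimes_R M)^\vee,\, \omega_R\bigr) \quad \text{in } D(R).
\]
Reading off cohomology: for $i > 0$, $\Tor^R_i(M,M) = \Ext^{-i}_R((M\otimes_R M)^\vee, \omega_R) = 0$, giving~(a). The right-hand side being concentrated in degree zero forces $\Ext^j_R((M\otimes_R M)^\vee, \omega_R) = 0$ for all $j > 0$, so $(M \otimes_R M)^\vee$ is MCM; hence its dual $(M \otimes_R M)^{\vee\vee}$ is also MCM, and the degree-zero part of the above quasi-isomorphism identifies $M \otimes_R M \cong (M \otimes_R M)^{\vee\vee}$. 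Thus $M \otimes_R M$ is MCM and in particular has no embedded primes, giving~(b). Theorem~\ref{bigthm} then delivers $M \cong R$.

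The main obstacle, and the technical heart of the argument, is the derived-category step: justifying that termwise dualization of the cochain complex $C^\bullet$ of MCM modules genuinely computes $\operatorname{RHom}_R(C^\bullet, \omega_R)$. This is standard once one invokes that $\omega_R$ is a dualizing module for the positively graded Cohen-Macaulay $k$-algebra $R$, so that $(-)^\vee$ is an exact self-inverse duality on the subcategory of MCM modules and lifts cleanly to the relevant bounded-below derived subcategory.
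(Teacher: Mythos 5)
Your argument is correct, and its overall shape is exactly the paper's: reduce to Theorem~\ref{bigthm} by showing that $\Tor_i^R(M,M)=0$ for $i>0$ and that $M\otimes_RM$ is maximal Cohen--Macaulay (hence has no embedded primes). The difference is that the paper obtains both facts in one stroke by citing \cite[Lemma 3.4(1)]{LM20}, whereas you reprove that lemma from scratch via the biduality $M\otimes^{\mathbf L}_RM\simeq\operatorname{RHom}_R((M\otimes_RM)^{\vee},\w_R)$. Your proof is a legitimate, self-contained substitute, and you correctly flag the one point that needs care: the termwise dualization of the complexes $F_\bullet\otimes_RM$ and $C^\bullet$, which are unbounded in the resolution direction, is not an instance of the naive ``acyclic objects compute derived functors'' principle for the contravariant functor $\Hom_R(-,\w_R)$ (right resolutions do not in general compute its right derived functor). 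What rescues it is that $\w_R$ has \emph{finite} injective dimension: computing $\operatorname{RHom}_R(Y,\w_R)$ as $\Hom_R(Y,I^\bullet)$ with $I^\bullet$ a bounded injective resolution of $\w_R$ gives a double complex with a finite column filtration, whose first spectral sequence degenerates when every term of $Y$ satisfies $\Ext^{>0}_R(Y_q,\w_R)=0$, yielding $\operatorname{RHom}_R(Y,\w_R)\simeq\Hom_R(Y,\w_R)$ with no boundedness hypothesis on $Y$. With that justification supplied, your argument goes through; it buys a proof of the cited lemma at the cost of a derived-category detour the paper avoids by citation.
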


\begin{proof}

From \cite[Lemma 3.4 (1)]{LM20}, we have that $M \otimes_R M$ is maximal Cohen-Macaulay and that $\Tor_i^R(M,M)=0$ for all $i>0$. The result then follows from Theorem \ref{bigthm}. \end{proof}

As an immediate consequence of Corollary \ref{extcor} (cf. \cite[Corollary 4.4.6]{BH93}), we prove the (commutative) graded case of the Tachikawa conjecture in charactersitic different from $2$, which also follows from work of Zeng. Notably, Zeng's approach requires passing to noncommutative algebras, whereas our proof uses only techniques in commutative algebra.   

\begin{cor}[{\cite[Theorem 1.3]{Zeng90}}]\label{cor200627a}
Let $R$ be a positively graded Cohen-Macaulay $k$-algebra with $\chara k \ne 2$. If $\Ext^i_R(\omega_R,R)=0$ for every $i>0$, then $R$ is Gorenstein. 
\end{cor}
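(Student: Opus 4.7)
The plan is to apply Corollary~\ref{extcor} directly with $M = \omega_R$, so that the bulk of the work reduces to verifying its three hypotheses for the canonical module. That $\omega_R$ is maximal Cohen-Macaulay is standard over a Cohen-Macaulay ring.

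For the multiplicity condition $e_R(\omega_R) = e(R)$, I would use the associativity formula
\[ e_R(\omega_R) = \sum_{\fp} \ell_{R_\fp}\bigl((\omega_R)_\fp\bigr)\, e(R/\fp), \]
summed over the minimal primes $\fp$ of $R$ with $\dim(R/\fp) = \dim R$. Since $R$ is Cohen-Macaulay, every minimal prime satisfies this dimension condition and $R_\fp$ is Artinian. The localization $(\omega_R)_\fp$ is a canonical module of $R_\fp$, and Matlis duality for Artinian local rings gives $\ell_{R_\fp}(\omega_{R_\fp}) = \ell_{R_\fp}(R_\fp)$. Summing recovers $e(R)$, yielding the desired equality.

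For the Ext-vanishing hypothesis, I would invoke the standard identification $\Hom_R(\omega_R, \omega_R) \cong R$, which is valid because $R$ is Cohen-Macaulay. This gives $M^{\vee} = \omega_R^{\vee} \cong R$, so the assumption $\Ext^i_R(\omega_R, R) = 0$ for $i > 0$ reads precisely as $\Ext^i_R(M, M^{\vee}) = 0$. Applying Corollary~\ref{extcor} then yields $\omega_R \cong R$ (up to a graded shift), which is the definition of $R$ being Gorenstein. The main mathematical content is really contained in Corollary~\ref{extcor}; the only step here is the unpacking of these classical canonical-module identities in the graded Cohen-Macaulay setting, for which standard references such as \cite{BH93} suffice, so no real obstacle is anticipated.
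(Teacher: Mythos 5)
Your proof is correct and matches the paper's intent: the authors state this corollary as an immediate consequence of Corollary~\ref{extcor} applied to $M=\omega_R$, and your verification of the hypotheses — $\omega_R$ maximal Cohen--Macaulay, $e_R(\omega_R)=e(R)$ via the associativity formula and Matlis duality over the Artinian localizations, and $\omega_R^{\vee}\cong R$ turning the Ext-vanishing hypothesis into the stated one — is exactly the standard unpacking the paper leaves implicit.
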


For Artinian rings, condition~(1) of Theorem~\ref{bigthm} follows from the hypothesis that $l_R(M)=l(R)$ and condtion~(3) of Theorem~\ref{bigthm} is automatically satisfied, so we obtain the following.

\begin{cor}\label{selfTorInd}\label{main}
Let $R$ be an Artinian positively graded $k$-algebra  with $\chara k \ne 2$. If $M$ is a finitely generated graded $R$-module such that  $l_R(M)=l(R)$ and $\Tor_i^R(M,M)=0$ for every $i>0$, then $M\cong R$.
\end{cor}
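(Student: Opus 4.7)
The plan is to reduce this corollary directly to Theorem~\ref{bigthm} by checking that its remaining hypotheses are automatic once $R$ is Artinian and graded. Specifically, I need to translate the length equality $l_R(M) = l(R)$ into the multiplicity equality $e_R(M) = e(R)$, and I need to rule out embedded primes of $M \otimes_R M$; the Tor vanishing hypothesis~(2) of Theorem~\ref{bigthm} is already supplied.

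For the multiplicity step, I would observe that since $R$ is Artinian we have $\dim R = 0$ and $\fm^n = 0$ for $n \gg 0$. The definition of Hilbert--Samuel multiplicity then collapses: for any finitely generated graded $R$-module $N$,
\[
e_R(N) \;=\; \lim_{n \to \infty} \frac{(\dim R)!\, l_R(N/\fm^n N)}{n^{\dim R}} \;=\; l_R(N).
\]
Applying this with $N = M$ and $N = R$ turns the hypothesis $l_R(M) = l(R)$ into condition~(1) of Theorem~\ref{bigthm}.

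For the embedded primes step, I would argue that in an Artinian positively graded $k$-algebra every element of $\fm$ is nilpotent, so $\fm$ is the only graded prime ideal of $R$. Since associated primes of a finitely generated graded module over a Noetherian graded ring are graded, we get $\Ass_R(M \otimes_R M) \subseteq \{\fm\}$, which is vacuously free of embedded primes; this gives condition~(3). All three hypotheses of Theorem~\ref{bigthm} are now in hand, and that theorem yields $M \cong R$.

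There is essentially no real obstacle here: the main point is conceptual, namely recognizing that Theorem~\ref{bigthm} becomes a statement purely about length and Tor-vanishing in the Artinian graded setting. The only small care needed is the degenerate case $M \otimes_R M = 0$, where $\Ass_R(M \otimes_R M) = \emptyset$ and the embedded-primes verification is trivially true, and the standard fact that associated primes of graded modules are graded, both of which can simply be cited.
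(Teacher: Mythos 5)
Your proof is correct and matches the paper's approach: the paper likewise obtains this corollary by noting that in the Artinian case condition (3) of Theorem~\ref{bigthm} is automatic (the only prime is $\fm$) and that $e_R(-)$ reduces to length in dimension zero, then invoking Theorem~\ref{bigthm}.
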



\section*{Acknowledgments}

We thank the reviewer for their suggestions for the improvement of this work. We would like to thank Luchezar Avramov and Srikanth Iyengar for helpful discussions on Tor-persistence. We thank Ryo Takahashi for helpful feedback, in particular for pointing out  \cite{Zeng90} to us. This work began during the Thematic Program in Commutative Algebra and its Interaction with Algebraic Geometry at the University of Notre Dame. We thank the organizers for a lovely conference.  The second author is  supported by NSF Grant DMS \#2001645.

\bibliographystyle{plain}
\bibliography{mybib}

\begin{thebibliography}{10}

\bibitem{AR75}
Maurice Auslander and Idun Reiten.
\newblock On a generalized version of the nakayama conjecture.
\newblock {\em Proceedings of the American Mathematical Society}, 52(1):69--74,
  1975.

\bibitem{AB00}
Luchezar~L. Avramov and Ragnar-Olaf Buchweitz.
\newblock Support varieties and cohomology over complete intersections.
\newblock {\em Invent. Math.}, 142(2):285--318, 2000.

\bibitem{AB97}
Luchezar~L Avramov, Ragnar-Olaf Buchweitz, and Judith~D Sally.
\newblock Laurent coefficients and ext of finite graded modules.
\newblock {\em Mathematische Annalen}, 307(3):401--415, 1997.

\bibitem{avramov:edcrcvct}
Luchezar~L. Avramov, Ragnar-Olaf Buchweitz, and Liana~M. {\c{S}}ega.
\newblock Extensions of a dualizing complex by its ring: commutative versions
  of a conjecture of {T}achikawa.
\newblock {\em J. Pure Appl. Algebra}, 201(1-3):218--239, 2005.

\bibitem{avramov:phcnr}
Luchezar~L. Avramov, Srikanth~B. Iyengar, Saeed Nasseh, and Sean~K.
  Sather-Wagstaff.
\newblock Persistence of homology over commutative noetherian rings.
\newblock preprint (2020), \texttt{arxiv:2005.10808}.

\bibitem{BH93}
Winfried Bruns and J\"urgen Herzog.
\newblock {\em Cohen-{M}acaulay rings}, volume~39 of {\em Cambridge Studies in
  Advanced Mathematics}.
\newblock Cambridge University Press, Cambridge, 1993.

\bibitem{buchsbaum:asffr}
David~A. Buchsbaum and David Eisenbud.
\newblock Algebra structures for finite free resolutions, and some structure
  theorems for ideals of codimension {$3$}.
\newblock {\em Amer. J. Math.}, 99(3):447--485, 1977.

\bibitem{frankild:rbsc}
Anders~J. Frankild, Sean Sather-Wagstaff, and Amelia Taylor.
\newblock Relations between semidualizing complexes.
\newblock {\em J. Commut. Algebra}, 1(3):393--436, 2009.

\bibitem{FSWT08}
Anders~J. Frankild, Sean Sather-Wagstaff, and Amelia Taylor.
\newblock Second symmetric powers of chain complexes.
\newblock {\em B. Iran. Math. Soc.}, 37:39--75, 2011.

\bibitem{HH05}
Douglas Hanes and Craig Huneke.
\newblock Some criteria for the {G}orenstein property.
\newblock {\em J. Pure Appl. Algebra}, 201(1-3):4--16, 2005.

\bibitem{HS04}
Craig Huneke, Liana~M. \c{S}ega, and Adela~N. Vraciu.
\newblock Vanishing of {E}xt and {T}or over some {C}ohen-{M}acaulay local
  rings.
\newblock {\em Illinois J. Math.}, 48(1):295--317, 2004.

\bibitem{Jo99}
David~A. Jorgensen.
\newblock Complexity and {T}or on a complete intersection.
\newblock {\em J. Algebra}, 211(2):578--598, 1999.

\bibitem{Jo992}
David~A. Jorgensen.
\newblock A generalization of the {A}uslander-{B}uchsbaum formula.
\newblock {\em J. Pure Appl. Algebra}, 144(2):145--155, 1999.

\bibitem{LM20}
Justin Lyle and Jonathan Monta{\~n}o.
\newblock Extremal growth of betti numbers and trivial vanishing of
  (co)homology.
\newblock {\em Transactions of the American Mathematical Society}, 2020.
\newblock to appear.

\bibitem{sather:sdm}
S.~Sather-Wagstaff.
\newblock Semidualizing modules.
\newblock in preparation \url{https://ssather.people.clemson.edu /DOCS/sdm.pdf}.

\bibitem{sega:stfcar}
Liana~M. {\c{S}}ega.
\newblock Self-tests for freeness over commutative {A}rtinian rings.
\newblock {\em J. Pure Appl. Algebra}, 215(6):1263--1269, 2011.

\bibitem{Zeng90}
Qiang Zeng.
\newblock Vanishing of {H}ochschild's cohomologies {$H^i(A\otimes A)$} and
  gradability of a local commutative algebra {$A$}.
\newblock {\em Tsukuba J. Math.}, 14(2):263--273, 1990.

\end{thebibliography}

\end{document}